\theoremstyle{plain}
\newtheorem{thm}{Theorem}[section]
\newtheorem{lem}[thm]{Lemma}
\newtheorem{prop}[thm]{Proposition}
\theoremstyle{definition}
\newtheorem{defn}[thm]{Definition}
\def\Aut{\operatorname{Aut}}
\def\PGL{\operatorname{PGL}}
\begin{document}

\title{On quadratic progression sequences on smooth plane curves}
\author[E. Badr] {Eslam Badr}
\address{$\bullet$\,\,Eslam Badr}
\address{Department of Mathematics,
Faculty of Science, Cairo University, Giza-Egypt}
\email{eslam@sci.cu.edu.eg}

\author[M. Sadek] {Mohammad Sadek}
\address{$\bullet$\,\,Mohammad Sadek}
\address{Faculty of Engineering and Natural Sciences,
 Sabanc{\i} University,
  Tuzla, \.{I}stanbul, 34956 Turkey}
\email{mmsadek@sabanciuniv.edu}

\maketitle

\begin{abstract}
We study the arithmetic (geometric) progressions in the $x$-coordinates of quadratic points on smooth projective planar curves defined over a number field $k$. Unless the curve is hyperelliptic, we prove that these progressions must be finite. We, moreover, show that the arithmetic gonality of the curve determines the infinitude of these progressions in the set of $\overline{k}$-points with field of definition of degree at most $n$, $n\ge 3$.
\end{abstract}

\section{Introduction}
Let $C$ be a smooth planar curve defined by a homogeneous polynomial equation $F(X,Y,Z)=0$ over a number field $k$ with an algebraic closure $\overline{k}$. An arithmetic (geometric) progression on $C$ is a set of $\overline{k}$-points on $C$ whose $x$-coordinates are in arithmetic (geometric) progression over $k$. If these points are in $C(k)$, the progression is said to be rational; if they lie in the set of quadratic points $\Gamma_2(C,k)$, it is said to be quadratic; if the field of definition of each point is of degree $\le n$, $n\ge3$, the progression is called an $n$-level progression.

Rational progressions on planar algebraic curves of genus $g\ge 1$ form the natural analogues of arithmetic and geometric progression sequences on the rational line. Consequently, a tempting question that rises up is whether these progressions on curves can be of infinite length. It turns out that these rational progressions must be finite. This follows as a direct consequence of Faltings' finiteness theorem of rational points on curves of genus at least two. Therefore, research has been directed toward the question of how long such a progression can be on an algebraic planar curve.

The problem of finding consecutive rational solutions to a diophantine equation can be traced back to Mohanty \cite{Mohanty}. He studied integral arithmetic progressions on the elliptic curve $y^2 = x^3 + k$. It was Bremner \cite{Bremner} who initiated the study of $\mathbb Q$-rational points in arithmetic progression on elliptic curves. In order to construct elliptic and hyperelliptic curves over $\mathbb Q$ with long rational arithmetic progressions, several theoretical and computational tools have been employed. The interested reader may refer to \cite{CampbellUlas,MacLeod,Ulas}. In the same vein rational geometric progressions on elliptic and hyperelliptic curves over $\mathbb Q$ have been studied. Again, the goal was to construct curves over $\mathbb Q$ with long rational geometric progressions \cite{sadekalaa,BremnerUlas}.

When one considers the set of quadratic points of planar curve, it turns out that these sets might be infinite for certain families of curves. Thus, one may study progressions in these sets in the hope of finding infinite progressions. In this note we tackle the aforementioned problem. For elliptic and hyperelliptic curves, one always finds quadratic progressions of infinite length. However, for other smooth planar curves, we show that despite the infinitude of the set of quadratic points in certain cases, quadratic arithmetic and geometric progressions still resemble rational progressions on smooth planar curves. More precisely, we prove that any quadratic progression on a smooth planar curve, which is neither elliptic nor hyperelliptic, must be finite.

The results of this note can be approached as follows. There are two separate cases that one may examine. If $\deg F \ge 5$, then our finiteness results follow from earlier work of Abramovich, Harris and Silverman. When $\deg F=4$, then the problem needs more analysis as one has to show that the arithmetic of smooth planar quartic curves does not allow the existence of infinite quadratic progressions.

Motivated by the existence of infinite quadratic progressions on elliptic and hyperelliptic curves, we establish a connection between the arithmetic gonality of smooth planar algebraic curves and infinite $n$-level progressions on these curves. More specifically, as long as $n$ is at least of the same size as the arithmetic gonality of the curve and the curve possesses a rational point, it follows that any $n$-level arithmetic or geometric progression on the curve is guaranteed to be of infinite length. For example, a smooth planar quartic curve $C$ over $k$ can never possess an infinite quadratic progression, yet any $n$-level progression on $C$, $n\ge 3$, must be infinite due to the fact that $C$ is always trigonal.

\subsection*{Acknowledgments} The second author is partially supported by Sabanc{\i} University Starting Grant Fund.
\section{Basic Definitions}
By $k$ we always mean a number field with a fixed algebraic closure $\overline{k}$. A smooth projective plane curve over $k$ is a smooth curve $C/k$ that is $k$-isomorphic to the zero locus of a homogenous polynomial equation $F(X,Y,Z)=0$ with coefficients in $k$ and no singularities over $\overline{k}$. We write $C(k)$ to denote the set of $k$-rational points of $C$. We write $\Gamma_2(C,k)$ for the set of quadratic points of $C/k$, i.e.,
\[\Gamma_2(C,k)=\bigcup_{\substack{\ell\\ [\ell:k]=2}}C(\ell).\]

\begin{defn}[Progression sequences]\label{defn1}
Let $C: F(X,Y,Z)=0$ be a projective plane curve over $k$. A sequence $P_i:=(x_i,y_i,z_i)$, for $i=1,2,\ldots$, of points in $C(k)$ (resp. $\Gamma_2(C,k)$), is called a \emph{rational (resp. quadratic) geometric progression sequence on $C$} if $x_i,y_i\in k$ such that $\{x_i\,|\,i=1,2,\ldots\}$ form a geometric progression in the base field $k$, that is to say there exist $t,t'\in k^*$ with $x_i=t't^i$, for $i=1,2,\ldots$.

Similarly, we may define \emph{rational (resp. quadratic) arithmetic progression sequence on $C$} if $\{x_i\,|\,i=1,2,\ldots\}$ form an arithmetic progression in the base field $k$, that is to say there exist $t,t'\in k^*$ with $x_i=t'+it$, for $i=1,2,\ldots$.
\end{defn}
We can always work affine by setting $Y=1$ because the $Y$-coordinate of a point in a progression sequence is, by definition, rational. We also note that a rational progression sequence on $C$ is by definition a quadratic progression sequence on $C$. The converse does not need to be true as will be seen later, see Lemma \ref{lem:hyperelliptic}.
\begin{defn}
Let $P_i:=(x_i,y_i,z_i)$, $i=1,2,\ldots,m$, be a finite geometric (resp. arithmetic) progression sequence on $C$. The integer $m$ is called the \emph{length} of the sequence.
\end{defn}

\section{Rational progressions on smooth plane curves}
Although experts may be familiar with the results in this section, we prefer to exhibit these results with their proofs in order for the paper to be self-contained.

\begin{prop}
\label{prop:rational}
Let $C:F(X,Y,Z)=0$, $\deg (F)\ge 3$, be a smooth projective plane curve over a number field $k$ with genus $g\ge 1$. Any rational geometric (resp. arithmetic) progression sequence on $C$ must be finite.
\end{prop}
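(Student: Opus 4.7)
The plan is to invoke Siegel's theorem on $S$-integral points on algebraic curves of positive genus; this handles both the elliptic case ($d=3$, $g=1$) and the higher-genus case ($d\geq 4$, $g\geq 3$) in a uniform way. For $g\geq 2$ one could alternatively quote Faltings' theorem, since then $\#C(k)<\infty$ already; but Faltings alone does not cover the plane cubic case, so Siegel is the cleaner single tool.

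First I would fix the geometric setup. Since $C$ is a smooth plane curve of degree $d=\deg F\geq 3$, the genus--degree formula gives $g=(d-1)(d-2)/2\geq 1$. Working in the affine chart $Y=1$ used in the paper, the rational function $x=X/Y$ on $\mathbb{P}^2$ restricts to a non-constant function on $C$, because $C$ is not contained in any line of $\mathbb{P}^2$; hence $x$ has at least one pole on $C$, which is the hypothesis Siegel needs.

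Second, I would observe that both flavours of progression place the sequence of $x$-coordinates inside a common ring of $S$-integers $\mathcal{O}_{k,S}$, for a finite set $S$ of places of $k$ that depends only on $t,t'\in k^*$. In the arithmetic case $x_i=t'+it$, take $S$ to contain all archimedean places together with every finite place $v$ at which $\min\{v(t),v(t')\}<0$; then $v(x_i)\geq 0$ for all $v\notin S$, so $x_i\in\mathcal{O}_{k,S}$. In the geometric case $x_i=t't^i$, the $x_i$ lie inside the finitely generated multiplicative subgroup of $k^*$ generated by $t$ and $t'$, hence inside $\mathcal{O}_{k,S}^*$ for a suitable finite $S$, and in particular inside $\mathcal{O}_{k,S}$.

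Finally, I would apply Siegel's theorem to the non-constant rational function $x$ on $C$: the set $\{P\in C(k) : x(P)\in\mathcal{O}_{k,S}\}$ is finite. Since the $x_i$'s are pairwise distinct in any non-degenerate progression (assuming common difference $t\neq 0$ in the arithmetic case and ratio $t$ of infinite multiplicative order in the geometric case), an infinite progression would furnish infinitely many distinct points $P_i\in C(k)$ with $S$-integral $x$-coordinate, contradicting Siegel. I do not anticipate any significant obstacle: the key conceptual input is simply that arithmetic and geometric progressions in a number field automatically live in a finitely generated ring of $S$-integers, which is exactly the hypothesis one needs to feed into Siegel's theorem.
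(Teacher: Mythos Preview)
Your argument is correct and is genuinely different from the paper's. The paper splits into cases: for $g\ge 2$ it simply invokes Faltings, while for $g=1$ it assumes an infinite progression, passes to a subsequence indexed by fifth powers, and builds an auxiliary curve $C'$ via the substitution $x\mapsto t'+tx$ (arithmetic case) together with the degree-$5$ cover $(x,y,z)\mapsto(x^5,y,z)$; Riemann--Hurwitz forces $g(C')\ge 2$, and the infinitely many rational points $(k,y_{k^5},z_{k^5})$ on $C'$ contradict Faltings. So the paper uses only Faltings, paying for uniformity by manufacturing a higher-genus cover in the elliptic case.

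Your Siegel approach is cleaner and more uniform: the observation that both kinds of progression have $x$-coordinates trapped in a single ring $\mathcal{O}_{k,S}$ lets one finish in one stroke for every $g\ge 1$, and Siegel is in a sense the lighter hammer. On the other hand, the paper's cover-and-Faltings trick is not wasted effort: the same idea of producing towers of covers $C_{2^n}\to C_{2^{n-1}}\to\cdots\to C$ and controlling their genera is exactly what drives the main Theorem on smooth plane quartics later in the paper, so their proof of this proposition doubles as a warm-up for that argument.
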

\begin{proof}
If the genus of $C$ is $g\ge 2$, then by Faltings' celebrated result, \cite{Fa1}, the set of rational points $C(k)$ is finite. In particular, any subset of $C(k)$ is finite, hence the result.

We are left with the case when the genus $g$ of the curve $C$ is $1$. We consider the following two subcases. (i) There is an infinite rational arithmetic progression $(x_i,y_i,z_i)$ in $C(k)$, where $x_i=t'+i t$, $i=1,2,\cdots$, and $t,t'\in k$: One considers the curve $C'$ defined by $$F'(x,y,z)=F(t'+tx,y,z)=0.$$ This curve has infinitely many rational points $(i,y_i,z_i)$, $i=1,2,\cdots$. We consider the subsequence of rational points $(k^{5},y_{k^5},z_{k^5})$, $k=1,2,\cdots$. Given that there is a $k$-morphism of degree $5$, namely, $C'\to C$, $(x,y,z)\mapsto(x^5,y,z)$, and that there are at least $10$ ramification points for the morphism, Riemann-Hurwitz formula implies that the curve $C'$ is of genus $g\ge 2$ with infinitely many rational points $(k,y_{k^5},z_{k^5})$ contradicting the fact that it must have finitely many rational points.
(ii) There is an infinite rational geometric progression $(x_i,y_i,z_i)$ in $C(k)$, where $x_i=t' t^i$, $i=1,2,\cdots$, $t,t'\in k$: The proof is similar, see \cite[Theorem 2]{sadekalaa}.
\end{proof}

One may pose the question of the existence of a uniform bound that depends on the degree of $F$ and $[k:\mathbb Q]$ for the number of elements in a rational geometric (resp. arithmetic) progression sequence on $C:F(x,y,z)=0$. Another problem would be providing explicit examples of polynomials $F(x,y,z)$ of a fixed degree $d$ such that $C$ possesses a rational progression of a given length. Furthermore, the finiteness result above motivates us to question the size of quadratic progression sequences on smooth plane curves which is the subject of this note.

\section{Quadratic progressions on smooth plane curves}

In contrast to rational progressions on smooth projective curves, quadratic progressions can be infinite. We need first to recall the following definition.

\begin{defn} A smooth projective curve $C$ is called \emph{hyperelliptic} (resp. \emph{bielliptic}) \emph{over $k$} if there exists a degree two $k$-morphism from $C$ to the projective line $\mathbb{P}^1_k$ (resp. to an elliptic curve $E$) over $k$.
\end{defn}

\begin{lem}\label{lem:hyperelliptic}
Given an algebraic curve $C$ defined over $k$ by $z^2=f(x,y)$ where $f$ is homogeneous with $\deg(f)\ge 2$, there exists an infinite quadratic geometric (resp. arithmetic) progression sequence on $C$.
\end{lem}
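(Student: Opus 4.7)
The proposed approach is to exploit directly the quadratic structure of the defining equation. Following the remark after Definition~\ref{defn1}, I work in the affine chart $y=1$ and rewrite $C$ as $z^2 = g(x)$, where $g(x) := f(x,1) \in k[x]$; this $g$ is a nonzero polynomial because homogeneity of $f$ together with $f \neq 0$ forces at least one monomial $a_i x^i y^{d-i}$ with $a_i \neq 0$, and dehomogenizing produces a nonzero element of $k[x]$. The crucial observation is that whenever $x_0 \in k$, any square root $z_0 = \sqrt{g(x_0)}$ is algebraic of degree at most $2$ over $k$, so the point $(x_0, 1, z_0)$ automatically lies in $\Gamma_2(C, k)$ (it is $k$-rational when $g(x_0)$ happens to be a square in $k$, and genuinely quadratic otherwise).

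To produce the progressions it then suffices to choose an infinite sequence of pairwise distinct $x_i \in k$ of the prescribed shape. For the arithmetic case, fix any $t, t' \in k^*$ and set $x_i := t' + it$ for $i = 1, 2, \ldots$; distinctness is automatic since $t \neq 0$. For the geometric case, fix $t' \in k^*$ and a non-root-of-unity $t \in k^*$ (for instance $t = 2$, which works in any number field since $k$ contains only finitely many roots of unity), and set $x_i := t' t^i$. In either case the resulting sequence of points $P_i := \bigl(x_i,\, 1,\, \sqrt{g(x_i)}\bigr)$ on $C$ is infinite, pairwise distinct (because the $x_i$ are), and lies in $\Gamma_2(C, k)$ by the observation above; its $x$-coordinates form the required arithmetic or geometric progression over $k$.

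There is no genuine obstacle: the entire content of the statement is the tautology that a double cover of the affine line yields quadratic points over every base value, so any progression in $x$-coordinates transports directly to a quadratic progression on $C$. The only point worth being attentive to is that one should not demand the $P_i$ all be non-rational quadratic points — the definition of $\Gamma_2(C,k)$ includes $C(k)$, so the construction goes through even in degenerate situations such as $f(x,y) = y^d$, where every $P_i$ happens to be $k$-rational.
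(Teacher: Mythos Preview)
Your argument is correct and follows essentially the same route as the paper: exploit the double cover $(x,1,z)\mapsto(x,1)$ to lift any rational progression in $x$-coordinates to a quadratic progression $(x_i,1,\sqrt{f(x_i,1)})$ on $C$. Your write-up is in fact more careful than the paper's (you verify distinctness and explain why the lifted points lie in $\Gamma_2(C,k)$), but the underlying idea is identical.
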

\begin{proof}
We have a canonical $2-1$ morphism to a projective line $\mathbb{P}^1_k$ over $k$ given by $(x,1,z)\mapsto (x,1)$. Thus, it suffices to consider any  geometric (resp. arithmetic) progression $x_i$, for $i=1,2,\ldots$ over $k$ of infinite length, and it will follow that $(x_i,1,\sqrt{f(x_i)})$ is indeed an infinite length progression on $C$.
\end{proof}
Lemma \ref{lem:hyperelliptic} gives an example of a quadratic geometric (resp. arithmetic) progression sequence, which is not rational, since on hyperelliptic curves rational geometric (resp. arithmetic) progression sequences are of finite length due to Proposition \ref{prop:rational}.

In view of a key result of Abramovich-Harris in \cite{abha} and Harris-Silverman in \cite{HaSi}, one knows exactly which curves one has to visit to look for infinite quadratic progression sequences.
\begin{thm}\label{AbramovichHarrisSilvermanI}
Let $C$ be a smooth projective curve over $k$ of geometric genus $g\geq 2$. The set $\Gamma_2(C,k)$ is an infinite set if and only if $C$ is
hyperelliptic over $k$, or bielliptic over $k$ such that there exists a degree two $k$-morphism from $C$ to an elliptic curve $E$ of positive rank over $k$.
\end{thm}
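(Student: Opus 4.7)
My plan is to prove the two implications separately, treating the easier sufficiency first and then the deeper necessity, which is where the work of Abramovich--Harris and Harris--Silverman really enters.

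For the \emph{if} direction, suppose $\pi : C \to \mathbb{P}^1_k$ is a degree two $k$-morphism. For every $a \in k$, the scheme-theoretic fibre $\pi^{-1}(a)$ has degree $2$ over $k$, so its geometric points lie in $C(k)$ or in $C(\ell)$ for some quadratic extension $\ell/k$; in either case they belong to $\Gamma_2(C,k)$. Distinct values of $a$ give disjoint fibres and $\mathbb{P}^1(k)$ is infinite, so $\Gamma_2(C,k)$ is infinite. For the bielliptic case with $\pi : C \to E$ of degree two and $E$ of positive rank over $k$, the same fibre argument applied to the infinitely many points of $E(k)$ produces infinitely many quadratic points on $C$.

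For the \emph{only if} direction, the natural object to study is the symmetric square $C^{(2)} := C^2/S_2$, which parameterises effective divisors of degree two on $C$. Each quadratic point $P$ of $C$ together with its Galois conjugate gives a $k$-rational point of $C^{(2)}$, and the same is true for unordered pairs of $k$-rational points; hence infinitude of $\Gamma_2(C,k)$ implies infinitude of $C^{(2)}(k)$. After possibly enlarging $k$ to fix a base point $P_0 \in C(k)$, I consider the Abel--Jacobi morphism
\[
\alpha : C^{(2)} \longrightarrow \mathrm{Jac}(C), \qquad P+Q \longmapsto [P+Q-2P_0],
\]
whose image is the classical subvariety $W_2 \subseteq \mathrm{Jac}(C)$. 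By Faltings' theorem on $k$-rational points of subvarieties of abelian varieties, the Zariski closure of $\alpha(C^{(2)}(k))$ in $\mathrm{Jac}(C)$ is a finite union of translates of abelian subvarieties. If $C^{(2)}(k)$ is infinite, at least one such translate $B+a \subseteq W_2$ is positive-dimensional, with $(B+a)(k)$ Zariski dense in it.

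The geometric heart of the argument is then to read off a special map on $C$ from this positive-dimensional component. Let $Y := \alpha^{-1}(B+a) \subseteq C^{(2)}$. Two cases arise. If $\alpha|_Y$ has a positive-dimensional fibre, then two generic points of that fibre are linearly equivalent effective divisors of degree two on $C$, which produces a $g^1_2$ and therefore exhibits $C$ as hyperelliptic. Otherwise $\alpha|_Y$ is generically finite, and since $g\ge 2$ forces $\dim W_2 = 2$ so $\dim B \le 1$, the component $B$ must be an elliptic curve $E \subseteq \mathrm{Jac}(C)$; pulling back and composing with a projection $C^2 \to C$ yields a non-constant $k$-morphism $C \to E$, which one checks has degree two, so $C$ is bielliptic. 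Density of $(B+a)(k)$ then transfers to infinitely many $k$-rational points of $E$, so $E$ has positive Mordell--Weil rank over $k$.

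The main obstacle is this last geometric step: showing rigorously that a positive-dimensional component of the Faltings locus in $W_2$ really does force a hyperelliptic or bielliptic structure on $C$ that is defined over $k$, rather than only over $\overline{k}$. Descending the $g^1_2$ or the degree two map $C\to E$ to the base field, and extracting positive rank of $E$ from the density of $k$-rational quadratic divisor classes, are the delicate points that require the full force of the Abramovich--Harris and Harris--Silverman analysis.
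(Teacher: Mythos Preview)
The paper does not actually prove this theorem: it is stated as a known result and attributed to Abramovich--Harris \cite{abha} and Harris--Silverman \cite{HaSi}, then used as a black box in the arguments that follow. So there is no ``paper's own proof'' to compare against; what you have written is a sketch of the argument contained in those cited references rather than something the present paper supplies.

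As a sketch of the Abramovich--Harris/Harris--Silverman argument, your outline is broadly on target---symmetric square, Abel--Jacobi to $W_2$, Faltings' theorem on subvarieties of abelian varieties, then a dichotomy between a $g^1_2$ and an elliptic translate---and you correctly flag the descent and rank issues as the delicate points. Two places are imprecise, though. First, the inference ``$\dim W_2=2$ so $\dim B\le 1$'' is not automatic: for $g=2$ one has $W_2=\mathrm{Jac}(C)$ and $B$ can be the whole Jacobian (which is fine, since genus~$2$ curves are hyperelliptic), while for $g\ge 3$ one needs the nontrivial fact that $W_2$ is not itself a translate of an abelian subvariety. Second, ``pulling back and composing with a projection $C^2\to C$'' does not produce a morphism $C\to E$; the projection goes the wrong way. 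The actual extraction of a degree~$\le 2$ map $C\to E$ from an elliptic translate inside $W_2$ is exactly the content of the Abramovich--Harris analysis (their statements $S(2,0)$ and $S(2,1)$), and requires more than composing maps in the diagram you wrote down. Since you already acknowledge that this step needs ``the full force'' of their work, this is more a warning not to oversimplify than a fatal gap.
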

As a direct consequence, one obtains the following result.
\begin{thm}\label{thm1}
Given a smooth projective plane curve $C: F(X,Y,Z)=0$ of degree $d\geq5$ over $k$, a quadratic geometric (resp. arithmetic) progression sequence on $C$ is always finite.
\end{thm}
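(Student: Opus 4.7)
The plan is to derive the finiteness of $\Gamma_2(C,k)$ from Theorem \ref{AbramovichHarrisSilvermanI}; once this is established, any quadratic progression sequence on $C$, being a subset of $\Gamma_2(C,k)$, is automatically finite. Since $\deg F = d \geq 5$, the genus--degree formula gives
\[ g(C) = \frac{(d-1)(d-2)}{2} \geq 6, \]
so the hypothesis $g \geq 2$ of Theorem \ref{AbramovichHarrisSilvermanI} is satisfied. It therefore suffices to show that $C$ is neither hyperelliptic nor bielliptic over $\overline{k}$; then both alternatives in that theorem are excluded and $\Gamma_2(C,k)$ is finite.

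Ruling out the hyperelliptic case is short. A smooth plane curve of degree $d \geq 3$ has gonality exactly $d-1$: projection from a point on $C$ realises a $g^1_{d-1}$, and the matching lower bound is classical (due essentially to Max Noether). For $d \geq 5$ the gonality is therefore at least $4$, so $C$ admits no degree-two morphism to $\mathbb{P}^1$.

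The main obstacle is ruling out biellipticity, for which my plan is to invoke the Castelnuovo--Severi inequality. Suppose, for a contradiction, that $f : C \to E$ is a degree-two morphism to an elliptic curve. Pick a point $P \in C(\overline{k})$ and let $\pi_P : C \to \mathbb{P}^1$ be the projection from $P$, which has degree $d-1$. Provided $f$ and $\pi_P$ do not both factor through a common nontrivial intermediate cover of $C$, Castelnuovo--Severi yields
\[ g(C) \leq 2\,g(E) + (d-1)\,g(\mathbb{P}^1) + (2-1)(d-2) = d, \]
contradicting $g(C) = (d-1)(d-2)/2 > d$, an inequality that holds precisely when $d \geq 5$. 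The composite-freeness hypothesis is automatic when $d$ is even (for then $\gcd(2, d-1) = 1$); when $d$ is odd, a common intermediate cover would force $\pi_P$ to descend through a morphism $E \to \mathbb{P}^1$, imposing the strong geometric restriction that $Q$ and its image under the bielliptic involution be collinear with $P$ for every $Q \in C$. Choosing $P$ generically avoids this, so the inequality applies and furnishes the required contradiction.

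Hence $C$ is neither hyperelliptic nor bielliptic, and Theorem \ref{AbramovichHarrisSilvermanI} gives that $\Gamma_2(C,k)$ is finite. Thus any quadratic arithmetic or geometric progression sequence on $C$ is finite, proving Theorem \ref{thm1}.
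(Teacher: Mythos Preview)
Your proof is correct and follows exactly the same strategy as the paper: show that a smooth plane curve of degree $d\ge 5$ is neither hyperelliptic nor bielliptic, and then invoke Theorem~\ref{AbramovichHarrisSilvermanI} to conclude that $\Gamma_2(C,k)$ is finite. The only difference is that the paper simply cites \cite[\S 2]{BaBaquadraticbielliptic} for this fact, whereas you supply a self-contained argument via gonality and the Castelnuovo--Severi inequality. Your treatment of the odd-$d$ case is slightly informal (``choosing $P$ generically''), but it is easily made precise: any automorphism of a smooth plane curve of degree $\ge 4$ is linear, so the bielliptic involution is conjugate to $\mathrm{diag}(1,1,-1)$, and the lines $\overline{Q\,\sigma(Q)}$ all pass through the single point $(0:0:1)$; hence at most one choice of $P\in C(\overline{k})$ is bad.
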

\begin{proof}
One easily can show that a smooth projective plane curve of degree $d\geq5$ is neither hyperelliptic nor bielliptic over $k$ (cf. \cite[\S 2]{BaBaquadraticbielliptic}). Consequently, $\Gamma_2(C,k)$ is a finite set by the aid of Theorem \ref{AbramovichHarrisSilvermanI}, in particular, any quadratic geometric (resp. arithmetic) progression sequence on $C$ should be finite.
\end{proof}
Theorem \ref{thm1} may be used to recover Proposition \ref{prop:rational} when $\deg(F)\ge 5$. Namely, a rational geometric (resp. arithmetic) progression sequence on a smooth plane curve $C: F(X,Y,Z)=0$ of degree $d\geq5$ over $k$ is finite.

According to Lemma \ref{lem:hyperelliptic} and Theorem \ref{thm1}, one sees that the curves of interest for us right now are those described by $F(X,Y,Z)=0$, where $\deg(F)=4$.


The set $\Gamma_2(C,k)$ of quadratic points of a smooth plane quartic curve $C$ may be infinite (cf. \cite[Theorems 3.2 and 3.6]{BaBaquadraticbielliptic}). At first sight, one may expect to find a quadratic geometric (resp. arithmetic) progression sequence of infinite length on such curves. In this section, we will show that $\Gamma_2(C,k)$ contains no such quadratic progression sequences when $C$ is a smooth plane quartic curve.

\begin{thm}\label{conjecture1}
 A quadratic geometric (resp. arithmetic) progression sequence on a smooth plane quartic curve $C$ over a number field $k$ is always of finite length.
\end{thm}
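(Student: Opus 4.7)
The plan is to argue by contradiction. Suppose $C$ carries an infinite quadratic arithmetic (resp.\ geometric) progression $(P_i)_{i\ge 1}$ with $x$-coordinates $x_i\in k$. Then $\Gamma_2(C,k)$ is infinite, and since a smooth plane quartic is canonically embedded (hence never hyperelliptic), Theorem~\ref{AbramovichHarrisSilvermanI} forces $C$ to be bielliptic: there exists a degree-two $k$-morphism $\phi\colon C\to E$ onto an elliptic curve $E$ of positive Mordell--Weil rank. Let $\iota$ denote the associated bielliptic involution and $\pi\colon C\to\mathbb{P}^1$ the $x$-coordinate projection.

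The crucial step is to establish that, on the infinite family $(P_i)$, the involution $\iota$ coincides with Galois conjugation. Writing $\bar P_i$ for the Galois conjugate of $P_i$, one has $\pi(\bar P_i)=\overline{x_i}=x_i$, so both $P_i$ and $\bar P_i$ lie in the fibre $\pi^{-1}(x_i)$. By Harris--Silverman's description of $C^{(2)}(k)$ (a consequence of Faltings' theorem on subvarieties of abelian varieties), for a non-hyperelliptic bielliptic $C$ all but finitely many elements of $C^{(2)}(k)$ arise as fibres $\phi^{-1}(R)$ with $R\in E(k)$. Discarding finitely many indices (including the at most four ramification points of $\phi$), we may assume $\{P_i,\bar P_i\}=\phi^{-1}(R_i)$ for some $R_i\in E(k)$, which forces $\iota(P_i)=\bar P_i$. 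Consequently, $\pi(\iota(P_i))=x_i=\pi(P_i)$ on an infinite (hence Zariski-dense) subset of $C$, so $\pi\circ\iota=\pi$ as morphisms. In particular, $\pi$ factors through $\phi$: there is a morphism $\psi\colon E\to\mathbb{P}^1$ with $\pi=\psi\circ\phi$.

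We now have infinitely many $R_i\in E(k)$ with $\psi(R_i)=x_i$, so the $\psi$-images of the $R_i$ form an arithmetic (resp.\ geometric) progression in $k$. To conclude, we apply the same pullback technique used in the proof of Proposition~\ref{prop:rational}: in the arithmetic case, form the fibre product $E':=E\times_{\mathbb{P}^1}\mathbb{P}^1$ along the degree-five map $u\mapsto t'+tu^5$ (with an analogous construction for the geometric case, cf.\ \cite{sadekalaa}). Riemann--Hurwitz applied to the cover $E'\to E$, together with the substantial ramification above $\psi^{-1}(t')\cup\psi^{-1}(\infty)$, yields $g(E')\ge 2$. On the other hand, $E'(k)$ contains the infinitely many $k$-rational points $(R_{k^5},k)$ for $k=1,2,\ldots$, contradicting Faltings' theorem.

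The main obstacle is the second paragraph, where one must carefully invoke the Harris--Silverman structural result on the symmetric square $C^{(2)}$ to ensure that the infinite family of quadratic points indeed arises from the bielliptic fibration (rather than being a more irregular infinite collection). Once this is secured, the identification of $\iota$ with Galois conjugation on the family yields the invariance $\pi\circ\iota=\pi$, collapsing the plane-quartic problem to an elliptic-curve statement that is dispatched by the genus-one technique of Proposition~\ref{prop:rational}. A minor bookkeeping point is the degree-count $\deg\psi=\deg\pi/2$, which is compatible with the standard choice $\deg\pi=4$ for smooth plane quartics; if instead $\deg\pi=3$ (i.e.\ $[0:1:0]\in C$), the factorization $\pi=\psi\circ\phi$ is ruled out on parity grounds and one obtains the contradiction immediately at the end of paragraph two.
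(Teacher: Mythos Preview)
Your approach differs substantially from the paper's. The paper first normalizes the bielliptic involution to $Z\mapsto -Z$, so that $C$ takes the form $aZ^4+L_2(X,Y)Z^2+L_4(X,Y)=0$, and then builds an infinite tower of double covers $C_{2^n}\to C_{2^{n-1}}\to\cdots\to C$ by the substitution $x\mapsto x^2$; each $C_{2^n}$ inherits infinitely many quadratic points, hence must be hyperelliptic or bielliptic, and both are ruled out (the first via the Abramovich--Harris statement $S(2,0)$, the second by passing to hyperelliptic quotients $H_{2^n}$ whose genera grow without bound and invoking Castelnuovo--Severi). You instead factor the $x$-projection through the elliptic quotient $E$, push the progression down to rational points on $E$, and finish with the genus-one pullback technique of Proposition~\ref{prop:rational}. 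Your route is shorter and avoids both the tower and Castelnuovo--Severi, at the cost of invoking the finer Faltings/Harris--Silverman description of $C^{(2)}(k)$ rather than merely the dichotomy of Theorem~\ref{AbramovichHarrisSilvermanI}. It is also worth noting that your identity $\pi\circ\iota=\pi$ is exactly what is needed to make the paper's coordinate normalization compatible with the given progression.

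There is, however, one gap in your second paragraph that must be closed for the final step to go through. A smooth plane quartic may carry several bielliptic involutions, so Harris--Silverman only guarantees that all but finitely many of the divisors $P_i+\bar P_i$ are fibres of \emph{some} bielliptic $\phi_j\colon C\to E_j$, not necessarily of your fixed $\phi$. Pigeonholing to an infinite subset $S\subset\mathbb N$ with $\iota(P_i)=\bar P_i$ for $i\in S$ still yields $\pi\circ\iota=\pi$ by Zariski density, but then you only know $R_i\in E(k)$ for $i\in S$, whereas your degree-five pullback requires $R_{j^5}\in E(k)$ for \emph{all} large $j$, not merely for $j$ in an uncontrolled infinite set. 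The repair is short but necessary: any involution in $\PGL_3$ satisfying $\pi\circ\iota=\pi$ must fix the centre of projection and (in suitable coordinates) act as $Z\mapsto \alpha X+\beta Y-Z$; two distinct involutions of this shape compose to a unipotent element of infinite order in $\PGL_3$. Hence at most one bielliptic involution is compatible with $\pi$, the remaining $\phi_j$ can account for only finitely many indices, and your ``discarding finitely many indices'' is then legitimate. With this addition your argument is complete.
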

\begin{proof}
If $\Gamma_2(C,k)$ is finite, then the statement is trivial. So we assume that $\Gamma_2(C,k)$ is infinite. It follows that either $C$ is hyperelliptic over $k$ or bielliptic over $k$, see Theorem \ref{AbramovichHarrisSilvermanI}. Since a smooth plane quartic curve is never hyperelliptic, \cite[Exercise IV.3.2]{Harts}, the curve $C$ is bielliptic over $k$. In particular, there exists an involution $\omega\in\Aut(C\times\overline{k})$ having $4$ fixed points on $C$. There is no loss of generality
to assume that $\omega$ is defined by $X\mapsto X$, $Y\mapsto Y$, $Z\mapsto -Z$ (this is true up to $\PGL_3(k')$-projective equivalence for some finite field extension $k'/k$. So, one may, and will, replace the field $k$ with the extension $k'$ by noticing that $C\times_kk'$ is bielliptic over $k'$ and $\Gamma_2(C\times k',k')$ still infinite as it contains $\Gamma_2(C,k)$). Now, $C$ may be described by an equation of the form \[aZ^4+L_2(X,Y)Z^2+L_{4}(X,Y)=0,\] where $a\in k^*$ and $L_{i}(X,Y)$ is a homogenous polynomial of degree $i$ in $k[X,Y]$. This is equivalent to saying that there is a $k$-morphism of degree $2$ from $C$ to the elliptic curve $E:aZ'^2+L_2(X',1)Z'=-L_4(X',1)$ defined by $(X,1,Z)\mapsto (X',1,Z')=(X,1,Z^2)$ and $E$ is of positive rank over $k$.

  Now, we assume on the contrary that $C$ has an infinite quadratic geometric progression sequence $P_i=(x_i,1,z_i)\in\Gamma_2(C,k)$, where $x_i=t't^i$, $i=1,2,3,\cdots$, $t',t\in k $ whereas $z_i\in k_i$ and $[k_i:k]=2$. Therefore, one obtains
\[az_i^4+L_2(t't^i,1)z_i^2+L_4(t't^i,1)=0.\]
For a fixed integer $s>1$, we consider the subsequence $(t't^{si},1,z_{si})$, $i=1,2,\cdots$. One has the smooth curve $C_s$ with the following plane (possibly singular) model \[C_s:az^4+L_2(t'x^s,1)z^2+L_4(t'x^s,1)=0\] over $k$ with the infinite quadratic geometric progression $(t^i,1,z_{si})$. In particular, this gives rise to an infinite tower of smooth curves
\[\cdots\xrightarrow{\phi_{n}} C_{2^n}\xrightarrow{\phi_{n-1}} C_{2^{n-1}}\xrightarrow{\phi_{n-2}}\cdots \xrightarrow{\phi_2}C_4\xrightarrow{\phi_1}C_2\xrightarrow{\phi_0}C_1=C\]
where $\phi_n:C_{2^{n+1}}\to C_{2^{n}}$ is a $k$-morphism of degree $2$ defined by $(x,1,z)\mapsto(x',1,z')=(x^2,1,z)$. Moreover, each of the curves $C_i$ admits an infinite quadratic geometric progression, namely, $(t^i,1,z_{2^ni})\in \Gamma_2(C_{2^n},k)$. Therefore, according to Theorem \ref{AbramovichHarrisSilvermanI}, each $C_{2^i}$, $i=1,2,\cdots,$ is either hyperelliptic or bielliptic. 
In what follows we rule out these two possibilities, hence such infinite quadratic progression can never exist.
\begin{itemize}
\item[(i)] Assume that $C_{2^i}$ is hyperelliptic, for some $i$, $i\ge1 $. If we consider the nonconstant morphism $C_{2^i}\to C$, then our assumption implies that $C_{2^i}$ admits a $k$-morphism of degree $2$ to the projective line $\mathbb{P}_k^1$, hence $C$ admits a $k$-morphism of degree $\le 2$ to $\mathbb{P}_k^1$, see statement $S(2,0)$ in \cite[p. 227]{abha}. In particular, $C$ has gonality $\leq2$, which contradicts the fact that $C$ is a smooth planar quartic curve. Hence $C_{2^i}$ is never hyperelliptic.
    \item[(ii)] Assume that $C_{2^i}$ is bielliptic for every $i\ge 1$. We consider the following diagram

   \begin{center}
   \begin{tikzpicture}[scale=1.5]
   \node(A0) at (0,1) {$\cdots$};
\node (A1) at (1,1) {$C_{2^n}$};
\node (A2) at (2,1) {$C_{2^{n-1}}$};
\node (A3) at (3,1) {$\cdots$};
\node (A4) at (4,1) {$C_{4}$};
\node (A5) at (5,1) {$C_2$};
\node (A6) at (6,1) {$C$};
\node (B0) at (0,0) {$\cdots$};
\node (B1) at (1,0) {$H_{2^n}$};
\node (B2) at (2,0) {$H_{2^{n-1}}$};
\node (B3) at (3,0) {$\cdots$};
\node (B4) at (4,0) {$H_{4}$};
\node (B5) at (5,0) {$H_2$};
\node (B6) at (6,0) {$E$};
\path[->,font=\scriptsize,>=angle 90]
(A0) edge node[above]{$\phi_{n}$} (A1)
(A1) edge node[above]{$\phi_{n-1}$} (A2)
(A2) edge node[above]{$\phi_{n-2}$} (A3)
(A3) edge node[above]{$\phi_{2}$} (A4)
(A4) edge node[above]{$\phi_{1}$} (A5)
(A5) edge node[above]{$\phi_{0}$} (A6)
(A1) edge node[right]{$j_n$} (B1)
(A2) edge node[right]{$j_{n-1}$} (B2)
(A4) edge node[right]{$j_2$} (B4)
(A5) edge node[right]{$j_1$} (B5)
(A6) edge node[right]{$j_0$} (B6)
(B0) edge node[above]{$\psi_{n}$} (B1)
(B1) edge node[above]{$\psi_{n-1}$} (B2)
(B2) edge node[above]{$\psi_{n-2}$} (B3)
(B3) edge node[above]{$\psi_{2}$} (B4)
(B4) edge node[above]{$\psi_{1}$} (B5)
(B5) edge node[above]{$\psi_{0}$} (B6);
\end{tikzpicture}
    \end{center}
The morphism $j_0:C\to E$ is the $k$-morphism of degree $2$ describing the bielliptic nature of $C$. The $k$-morphism $j_i:C_{2^i}\to H_{2^i}$, $i=1,2,\cdots,$ is given by $(x,1,z)\mapsto (x',1,z')=(x,1,z^2)$ where the hyperelliptic curve $H_{2^i}$ is described by $az^2+L_2(t'x^{2^i},1)z=-L_4(t'x^{2^i},1)$; whereas the morphism $\psi_i:H_{2^{i+1}}\to H_{2^{i}}$ is described by $(x,1,z)\mapsto(x',1,z')=(x^2,1,z)$.

The morphism $C_{2^i}\to H_{2^i}$ with $C_{2^i}$ being bielliptic yields that $H_{2^i}$ is bielliptic itself, this is statement $S(2,1)$, \cite[p. 227]{abha}. By Riemann-Hurwitz formula, one knows that the genus of $H_{2^i}$ is strictly larger than the genus of $H_{2^{i-1}}$. Thus one reaches a contradiction by noticing that Castelnuovo-Severi inequality prevents a hyperelliptic curve of genus $g> 3$ from being bielliptic, see for instance \cite{accola}.
\end{itemize}
The case of quadratic arithmetic progression sequences can be treated in a similar fashion.
\end{proof}

\begin{section}{Higher level progressions on smooth plane curves}
We recall that the {\em arithmetic gonality}, $\operatorname{gon}(C,k)$, of an algebraic curve $C$ over $k$ is defined to be the lowest degree of a morphism from $C$ to the projective line. We may generalize Definition \ref{defn1} as follows.
\begin{defn}
Let $C$ be a projective plane curve defined by $F(X,Y,Z)=0$ over $k$. A sequence $P_i:=(x_i,y_i,z_i)$, for $i=1,2,\ldots$, of points in $\Gamma_n(C,k)$, is called an {$n$-level progression sequence on $C$} if $x_i,y_i\in k$ such that $\{x_i\,|\,i=1,2,\ldots\}$ form a progression in the base field $k$. Here $\Gamma_n(C,k)$ denotes the set of $n$-points of $C$, that is the set of all points on $C$ with field of definition of degree at most $n$ over $k$.
\end{defn}

\begin{thm}
Let $C$ be a smooth plane curve defined by $F(X,Y,Z)=0$ over $k$, where $F$ is of degree $d\geq4$, such that $C(k)\neq\emptyset$. Then $C$ admits an $n$-level progression of infinite length for any  $n\geq\operatorname{gon}(C,k)$.
\end{thm}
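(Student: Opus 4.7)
The plan is to reduce everything to a routine fibre-count for the $x$-coordinate projection, after a single $k$-linear change of coordinates on $\mathbb{P}^2$ that places the rational point at $[0:0:1]$. The central observation is that, because $C(k)\neq\emptyset$, the arithmetic gonality of $C$ is already pinned down. Indeed, picking any $P_0\in C(k)$, projection from $P_0$ yields a $k$-morphism $C\to\mathbb{P}^1_k$ of degree $d-1$, whence $\operatorname{gon}(C,k)\le d-1$; combined with the classical Noether-type fact that a smooth plane curve of degree $d\ge 4$ has (geometric) gonality exactly $d-1$, we get $\operatorname{gon}(C,k)=d-1$ and the hypothesis becomes $n\ge d-1$.

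Next, I would apply an element of $\operatorname{PGL}_3(k)$ (which acts transitively on $\mathbb{P}^2(k)$) moving $P_0$ to $[0:0:1]$, the centre of the linear projection $\pi_x:[X:Y:Z]\mapsto[X:Y]$. In the new coordinates, $[0:0:1]\in C$ forces the $Z^d$-coefficient of $F$ to vanish, so $F(X,1,Z)\in k[X][Z]$ has $Z$-degree at most $d-1$; equivalently, using smoothness of $C$ at $[0:0:1]$ to extend $\pi_x$, the morphism $\pi_x:C\to\mathbb{P}^1_k$ has degree $d-1$. The construction then writes itself: fix any infinite arithmetic (resp.\ geometric) progression $\{x_i\}_{i\ge 1}\subset k$. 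For each $i$, the fibre $\pi_x^{-1}(x_i)$ is cut out by $F(x_i,1,Z)=0$, so it is an effective $k$-divisor of degree at most $d-1$ on $C$ and hence contains at least one closed point $P_i$ with $[k(P_i):k]\le d-1\le n$. In particular $P_i\in\Gamma_n(C,k)$, and the sequence $(P_i)_{i\ge 1}$ supplies the required infinite $n$-level progression.

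The main obstacle to watch for is the bookkeeping behind the coordinate change: it alters the ambient embedding, and strictly speaking the ``$x$-coordinate'' of the original model differs from that of the transformed one. This should be handled in the same spirit as in the proof of Proposition~\ref{prop:rational}, where the affine change $x\mapsto t'+tx$ was applied freely; the assertion ``$C$ admits an $n$-level progression'' is then most naturally read as an intrinsic statement about the smooth plane curve up to $\operatorname{PGL}_3(k)$-equivalence of planar embeddings, so that the $\operatorname{PGL}_3(k)$-move carried out above is legitimate.
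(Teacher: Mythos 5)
Your proposal is correct and follows essentially the same route as the paper's proof: identify $\operatorname{gon}(C,k)=d-1$ via projection from a $k$-rational point, normalize that point to $(0:0:1)$ by a $\operatorname{PGL}_3(k)$-change of coordinates so that $F(x_i,1,Z)$ has $Z$-degree at most $d-1$, and pick in each fibre over a progression term a point whose field of definition has degree at most $d-1\le n$. The only difference is cosmetic: you make explicit the bookkeeping of the coordinate change, which the paper handles implicitly by working "up to $\operatorname{PGL}_3(k)$-equivalence."
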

\begin{proof}
Given a smooth plane curve $C$ over a perfect field $k$, the geometric gonality for $C$ equals $d-1$ and every geometric gonal map is a projection from a point on $C\times_k\overline{k}$ (cf. \cite{Na, Yoshihara}). Consequently, if we further assume that $C(k)\neq\emptyset$, then the arithmetic gonality for $C$ over $k$ is equal to its geometric gonality and so any $k$-gonal map $f:C\rightarrow \mathbb{P}^1_k$ is a projection from $C$ to the projective line $\mathbb{P}^1_k$ centered at a $k$-point of $C$ (in particular, $k$-gonal maps are finitely many because $C(k)$ is a finite set due to Faltings' \cite{Fa1}).

We may assume that the reference point $(0:0:1)$ belongs to $C: F(X,Y,Z)=0$, 
in particular, $C$ is defined, up to $\PGL_3(k)$-equivalence, by \[Z^{d-1}X+\text{lower order terms in}\,Z\,\text{over}\,k.\] Let $(x_i,1)$, for $i=1,2,\ldots$, be any infinite rational progression of $\mathbb{P}^1_k$. Choose, for each $i=1,2,\ldots$, a point $(x_i,1,z_i)\in f^{-1}(x_i,1)$. Consequently, the minimal irreducible polynomial $\operatorname{Irr}_{z_i,k}(z)$ of $z_i$ over $k$ divides $F(x_i,1,z)$ in $k[z]$, hence $(x_i,1,z_i)$ is defined over some field extension $k_i$ of degree $\operatorname{deg}(\operatorname{Irr}_{z_i,k}(z))\leq d-1$ over $k$. This yields an infinite $(d-1)$-level progression for $C$, which was to be shown.

\end{proof}
\end{section}

\end{document}